\newtheorem{theorem}{Theorem}
\newtheorem{corollary}[theorem]{Corollary}
\newtheorem{lemma}[theorem]{Lemma}
\newtheorem{proposition}[theorem]{Proposition}
\newtheorem{remark}[theorem]{Remark}
\newenvironment{proof}[1][Proof]{\textbf{#1.} }{\ \rule{0.5em}{0.5em}}
\begin{document}

\title{{\LARGE From the Sinai's walk to the Brox diffusion using bilinear forms.
\footnote{This research was partially supported by the CONACYT.}}}
\author{  
Carlos G. Pacheco
\thanks{Departamento de Matematicas, CINVESTAV-IPN, A. Postal 14-740, Mexico D.F. 07000, MEXICO. Email: cpacheco@math.cinvestav.mx}}

\maketitle

\begin{abstract}
Using the generators, we establish a connection between the Sinai's random walk and the so-called Brox process. 
We first find the Dirichlet form of the Brox diffusion, and then prove that it is the limit of the Dirichlet form of the Sinai's random walk.
This also gives a natural way to connect between the Brox diffusion and the Brownian motion.
\end{abstract}

{\bf 2000 Mathematics Subject Classification: 60K37, 60F05}
\\

\textbf{Keywords:} random walks and diffusions with random environment, stochastic generators, Dirichlet forms.


\section{Introduction}

Sinai \cite{Sinai} studied the limiting behaviour of a random walk in random environment, and four years later Brox \cite{Brox} asked the same question for a diffusion with random coefficients. 
In both cases the same long time behaviour was found. 
In Seignourel \cite{Seignourel} it was shown the convergence in distribution of the Sinai's walk to the Brox diffusion.
Here we propose a modification of the scaling and a different method to connect both processes.
In particular, we what we do is to see that the generator of the Brox diffusion is the limit of the generator of the Sinai's walk.
This provides an straight forward way to establish the connection between these processes.
Moreover, this way to proceed helps to see the Brownian motion and the Brox process as models arising from the same microscopic system but with different local conditions.

An account regarding the connection of both models can be found in \cite{Shi}.
Other places where one finds invariance principles in the context of random media are \cite{Kawazu4}, where the random environment has a large drift; see \cite{Gu} to find work involving the so-called Brownian motion in random scenery; \cite{Mathieu2}, regarding random walks with random conductances in dimensions $2$ or more; and \cite{Mytnik}, regarding branching particle systems.


Let us briefly introduce an example of Sinai's walks (abbreviated in general RWRE), taken from R\'{e}v\'{e}sz \cite{Revesz}. 
Let $E:=\{p(z),\ z\in\mathbb{Z}\}$ be a family of i.i.d. random variables (called the environment) such that 
$P(p(z)=3/4)=P(p(z)=1/4)=1/2$. 
Then, the RWRE $R:=\{R_{k},\ k=1,2,\ldots\}$ has the following dynamics,
\begin{equation*}
P(R_{k+1}=y|R_{k}=z, E)=\left\{
\begin{array}{cr}
p(z) & \text{if }y=z+1\\
1-p(z) & \text{if }y=z-1.
\end{array}
\right.
\end{equation*} 
We may write $p(z)=1/2+q(z)$, with the following Bernoulli random variable 
\begin{equation*}
q(z):=\left\{
\begin{array}{cr}
1/4 & 1/2\\
-1/4 & 1/2.
\end{array}
\right.
\end{equation*} 

Another way to present $R_{n}$ is the following:
\begin{equation*}
R_{n}=\sum_{i=1}^{n}\xi_{i}
\end{equation*}
where $\xi_{1},\xi_{2},\ldots$ are a sequence of random variables specified by
\begin{equation*}
\xi_{i}:=
\left\{
\begin{array}{cl}
1 & p\left(R_{i-1}\right) \\
-1 & 1-p\left(R_{i-1}\right).
\end{array}
\right.
\end{equation*}
This form goes more in hand with the way it is generally presented the classical random walk, and it turns out to help in our proofs.

When there is no random environment in the random walk we know that, after some rescaling, the central limit theorem gives rise to the normal random variable. 
On the other hand, Sinai \cite{Sinai} proved a limit theorem for the random walk where the environment is not being modified by any type of scaling. 
In the current paper we concern with the situation where the environment also suffers some rescaling.
Indeed, we consider random variables of the form
\begin{equation*}
\xi_{i}^{(n)}:=
\left\{
\begin{array}{cl}
1 & p_{n}\left(S_{i-1}\right) \\
-1 & 1-p_{n}\left(S_{i-1}\right),
\end{array}
\right.
\end{equation*}
where 
\begin{equation*}
p_{n}(x):=1/2+q(x)/n^{1/4}. 
\end{equation*}
Now set
\begin{equation*}
S_{0}:=0\text{ and }S_{k}^{(n)}:=\sum_{i=1}^{k}\xi_{i}^{(n)}. 
\end{equation*}
Then, in particular, we study limit behaviour, as $n\to\infty$, of 
$$S_{n}^{(n)}/\sqrt{n}.$$
The limit is not Gaussian as perhaps one might try to conjecture.
One can put this in contrast with the random variables
\begin{equation*}
R_{n}/(\log(n))^{2},
\end{equation*}
which converges in distribution to non-trivial random variable, see \cite{Sinai}.

Now, let us present the Brox diffusion, which strictly speaking is not even Markov, but one defines it first by conditioning to the environment, which indeed gives a diffusion. 
Define a continuous time stochastic process $\{X_{t},\ t\geq 0\}$ with continuous trajectories proposed through the expression
\begin{equation*}
dX_{t}=dB_{t}-\frac{1}{2}W^{\prime}(X_{t})dt,\ X_{0}=0,
\end{equation*}
where $B$ and $W$ are independent Brownian motions. This expression is meat to be stochastic differential equation with a random coefficient $W^{\prime}$. One way to see that process $X$ exists is by noticing that its generator would take the form
\begin{equation*}
\frac{1}{2}e^{W(x)}\frac{d}{dx}\left( e^{-W(x)} \frac{d}{dx}\right).
\end{equation*} 
Once one defines the conditioned process $X$ given an environment $W$, using the law of total probability, one defines what really the process $X$ is.

The aim of this paper is to prove that the Dirichlet form associated to the Sinai's walk converges to the corresponding one of the Brox diffusion.
To do that we need first to find the Dirichlet form of the Brox diffusion which is done in the following 2 sections. 
In Section 4, we present the sequence of Sinai's random walks that would help to approximate the Brox diffusion.
Then, in Section 5, we show the desired convergence.
There is an appendix at the end with a result we use at some point.


\section{The Brox diffusion}

Informally speaking, we call the stochastic process $X:=\{X_{t},\ t\geq 0\}$ the Brox diffusion, if it is solution of the equation
\begin{equation*}
dX_{t}=dB_{t}-\frac{1}{2}W^{\prime}(X_{t})dt,\ X_{0}=0.
\end{equation*}
Here $B:=\{B_{t},t\geq 0\}$ is a standard BM and $W:=\{W(x),x\in \mathbb{R}\}$ is a two-sided BM, they both independent from each other. 
This expression can be interpreted as a stochastic differential equation with a random coefficient $W^{\prime}$.

Based on standard theory of diffusions, in Brox \cite{Brox} it is properly defined $X$.
This is done arguing that
\begin{equation}\label{EqL}
L:=\frac{1}{2}e^{W(x)}\frac{d}{dx}\left( e^{-W(x)} \frac{d}{dx}\right)
\end{equation} 
is the infinitesimal generator associated to the equation displayed above. 
Thus, leaving fixed a trajectory of $W$, one is able to see that process $X$ is Markov process with generator denoted by $L$.
Moreover, this way of thinking corresponds to considering the scale function 
\begin{equation}\label{EqA}
A(x):=\int_{0}^{x}e^{W(y)}dy,\ x\in \mathbb{R}, 
\end{equation}
and the speed measure 
\begin{equation}\label{Eqm}
m(C):=\int_{C}2e^{-W(x)}dx,\ \text{ for }\text{Borel sets }C\subset\mathbb{R}.
\end{equation}
Then, using the scale function and a time-change transformation, Brox \cite{Brox} proposed the following explicit construction of $X$:
\begin{equation}\label{EqFX}
X_{t}=A^{-1}(B(T^{-1}(t, B))), \ t\geq 0,
\end{equation}
where 
\begin{equation}\label{FormulasT}
T(u, B):=\int_{0}^{u}e^{-2W(A^{-1}(B(s)))}ds,\ x\in\mathbb{R},\ u\geq 0.
\end{equation}
We can see that there are two sources of randomness, one coming from $B$ and the other from $W$.
We will say that $B$ is the \textit{intrinsic randomness} of $X$, and that $W$ is the \textit{external source of randomness}, i.e. the \textit{environment}.
We may leave fixed either $B$ or $W$ and study the random dynamics of the process $X$.
For instance, let $F$ the function in (\ref{EqFX}) that defines $X$ given $W$, i.e. 
\begin{equation}\label{EqF}
X=F(W).
\end{equation}
We could one step further and think of $F$ as a function of $B$ as well. 

If we leave fixed a trajectory $W$, one could write $X^{(W)}:=F(W)$ to emphasize that the process $X$ is conditioned to the environment $W$.
The corresponding probability measure of $X^{(W)}$ over $C[0,\infty)$ is denoted $P_{W}$ and it is called the \textit{quenched measure}. 
Whereas the probability measure on $C[0,\infty)$ coming from $X$ without leaving fixed $W$ is called the \textit{annealed probability}, and it is denoted by $P$. 
In other words, if $\mu$ is the probability measure over $C(\mathbb{R})$ associated to the two-sided Brownian motion $W:\Omega\to C(\mathbb{R})$, then
\begin{equation*}
P(C)= \int_{\Omega}P_{W(\omega)}(C)\mu(d\omega),
\end{equation*}
for any measurable set $C$ in $C[0,\infty)$.

To simplify notation, instead of writing $X^{(W)}$, with no risk of confusion we only write $X$, because we will in general be working with the stochastic process $X$ after leaving fixed an environment $W$, i.e. the quenched case.



Now, since $X$ is a Markov process for each fixed environment $W$, there is a semigroup $\{H_{t},\ t\geq 0\}$ defined as
\begin{equation}\label{EqSemi}
H_{t}f(x):=E[f(X_{t})|X_{0}=x],
\end{equation}
with $f\in C_{0}$, the space of real-valued continuous functions vanishing when $|x|\to \infty$. 
Symbollically, the generator of such semigroup is $L$ in (\ref{EqL}), and let $D$ be the domain of $L$.
Observe that the domain $D$ depends on the environment $W$.

Before we continue, let us mention that in the rest of the paper we denote by $C_{0}$ the space $\mathbb{R}\to\mathbb{R}$ of continuous functions that vanish as $|x|\to \infty$, and by $C_{0}^{k}$ the subspace of $k$-times differentiable functions.

\begin{remark}
Notice that at first glance $Lf$ seems to be undefined, because $dW/dx$ does not exist, but since $X^{(W)}$ is indeed a diffusion, there is indeed a generator $L$ with domain $D$, and such domain is known to be dense in $C_{0}$ 
(see e.g. \cite[Lemma 3, p. 23]{Mandl}). 
Therefore, $Lf$ is well defined for any $f\in D$.
\end{remark}

The following result will be useful for our purposes.
\begin{proposition}\label{PropDom}
For any environment $W$, the domain $D$ is contained in the space of differentiable functions $C^{1}(\mathbb{R})$.
\end{proposition}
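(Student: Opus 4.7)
The plan is to use the Feller characterization of the domain of a one-dimensional regular diffusion in terms of its scale function $A$ and speed measure $m$. The generator in (\ref{EqL}) factors as $L=\frac{d}{dm}\frac{d}{dA}$, where by (\ref{EqA}) and (\ref{Eqm}) we have $A'(x)=e^{W(x)}>0$ and $dm(x)=2e^{-W(x)}dx$. This is just the usual reading of a regular diffusion generator as a generalized second derivative.

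I would then appeal to the classical characterization (as in the Mandl reference already cited in the preceding remark): the domain $D$ consists precisely of those $f\in C_{0}$ for which the scale derivative
$$\frac{df}{dA}(x):=\lim_{y\to x}\frac{f(y)-f(x)}{A(y)-A(x)}$$
exists for every $x$ and is continuous, and for which $\frac{d}{dm}\frac{df}{dA}=Lf$ lies in $C_{0}$ (plus appropriate behaviour at the natural boundaries $\pm\infty$, which plays no role here).

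The key observation is that, since $W$ is continuous, $A$ is a strictly increasing $C^{1}$ function with continuous and strictly positive derivative $A'=e^{W}$, so $A$ is a $C^{1}$ homeomorphism of $\mathbb{R}$ onto its image. Writing $f=(f\circ A^{-1})\circ A$ and applying the chain rule, the limit defining $\frac{df}{dA}(x)$ exists if and only if the classical derivative $f'(x)$ exists, in which case
$$f'(x)=A'(x)\,\frac{df}{dA}(x)=e^{W(x)}\frac{df}{dA}(x).$$

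From this the conclusion is immediate: for $f\in D$, the right-hand side above is the product of two continuous functions of $x$, so $f'$ exists at every point and is continuous, whence $f\in C^{1}(\mathbb{R})$. The only nontrivial ingredient is the scale-and-speed description of $D$, but this is classical and is in fact already implicit in the assertion (invoked in the preceding remark) that the diffusion with generator $L$ is well defined. So I do not expect any real obstacle beyond being careful to justify moving between $\frac{df}{dA}$ and $f'$ via the chain rule.
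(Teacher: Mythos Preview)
Your argument is correct and rests on the same classical ingredient as the paper's proof, namely the Feller--Mandl scale-and-speed description of the domain of a one-dimensional diffusion. The paper, however, does not invoke directly the characterization ``$f\in D$ implies $\tfrac{df}{dA}$ exists and is continuous''; instead it quotes the integral representation (Lemma~2 of \cite[p.~22]{Mandl})
\[
f(x)=\int_{0}^{x}\int_{0}^{y}g(s)\,dm(s)\,dA(y)+f(0)+A(x)\,\frac{df}{dA}(0),\qquad g=Lf,
\]
substitutes $dA(y)=e^{W(y)}dy$ and $dm(s)=2e^{-W(s)}ds$, and then reads off that the right-hand side is a $C^{1}$ function of $x$. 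Your chain-rule argument $f'(x)=e^{W(x)}\tfrac{df}{dA}(x)$ reaches the same conclusion more directly, bypassing the explicit formula; the paper's version has the minor advantage of displaying $f$ concretely in terms of $Lf$, which makes the differentiability visible by inspection. In either case the only substantive input is the Mandl description of $D$, so the two arguments are equivalent in content.
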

\begin{proof}
According to Lema 2 in \cite[p.22]{Mandl}, if $g(x):=[Lf](x)$ for $f\in D$ then
\begin{equation*}
f(x)=\int_{0}^{x}\int_{0}^{y}g(s)dm(s)dA(y)+f(0)+ A(x)\frac{df}{dA}(0),
\end{equation*}
where 
\begin{equation}
\frac{df}{dA}(0):=\lim_{y\to 0}\frac{f(y)-f(0)}{A(y)-A(0)}=\lim_{y\to 0}\frac{f(y)-f(0)}{\int_{0}^{y}e^{W(s)}ds}\frac{y}{y}
= \frac{f^{\prime}(0)}{e^{W(0)}}= f^{\prime}(0).
\end{equation}
Thus, pluging the scale funtion (\ref{EqA}) and the speed measure (\ref{Eqm}), we have that
\begin{equation*}\label{Eqfdiff}
f(x)=2\int_{0}^{x}\int_{0}^{y}g(s)2e^{-W(s)}e^{W(y)}dsdy+f(0)+ \frac{df}{dA}(0) \int_{0}^{x}e^{W(y)}dy.
\end{equation*}
This tells us that $f$ is indeed differentiable at every $x\in \mathbb{R}$.
\end{proof}




\section{Approximations and the Dirichlet form}\label{SecAprox}

Let $\{W_{n}\}$ be a sequence of stochastic processes that converge weakly to $W$.
Then, by considering the map $F$ in (\ref{EqF}), one can study the processes $X^{(n)}:=F(W_{n})$.
If the trajectories of $W_{n}$ are differentiable functions, $X^{(n)}$ can be analyzed using standard tools.
In particular, we can approximate the Dirichlet form of $X$ by considering the Dirichlet form of $X^{(n)}$.


Let us start. 
Let $W_{n}, n=1,2,\ldots$ be a sequence of processes with piecewise differentiable paths and such that $W_{n}$ converges weakly to $W$ as $n\to\infty$.
As it was done for (\ref{EqA})-(\ref{FormulasT}), define
\begin{equation}
X_{t}^{(n)}(B):=A_{n}^{-1}(B(T_{n}^{-1}(t, B))),
\end{equation}
where 
\begin{equation*}
A_{n}(x):=\int_{0}^{x}e^{W_{n}(y)}dy, \ x\in\mathbb{R}
\end{equation*}
and 
\begin{equation*}
T_{n}(u, B):=\int_{0}^{u}e^{-2W_{n}(A_{n}^{-1}(B(s)))}ds,\ u\geq 0.
\end{equation*}
For each $n=1,2, \ldots$ we denote by $H_{t}^{(n)}$ the associated semigroup of $X^{(n)}$ after leaving frozen the environment $W_{n}$.
After finishing our paper, we came across with the manuscript \cite{Hu}, where approximation of this type are also used to study the Brox diffusion.

The generator of $X^{(n)}$ is given by
\begin{equation}
[L^{(n)}f](x):=\frac{1}{2}e^{W_{n}(x)}\frac{d}{dx}\left( e^{-W_{n}(x)}\frac{d}{dx} \right)
=\frac{1}{2}\frac{d^{2}f}{dx^{2}}-\frac{W_{n}^{\prime}(x)}{2}\frac{df}{dx}.
\end{equation}
Then we can describe $L^{(n)}$ using the bilinear form
\begin{equation*}
\langle L^{(n)}f,g \rangle =
 -\frac{1}{2}\int_{-\infty}^{\infty} f^{\prime}g^{\prime} - \frac{1}{2}\int_{-\infty}^{\infty} f^{\prime}gdW_{n},
\end{equation*}
for any $f,g\in C_{0}^{1}$, the space of real-valued differentiable functions that vanishes as $|x|\to\infty$.
\begin{center}
Our aim is to analyze $\langle L^{(n)}f,g \rangle$ as $n\to\infty$.
\end{center}


Recall that a trajectory of $W$ is being fixed. 
In addition, one can go one step further and consider $X$ to be a function of each trajectory $B\in C[0,\infty)$. 
Indeed, consider the map 
\begin{equation*}
(t,B)\mapsto X_{t}(B),
\end{equation*}
and the same for $X^{(n)}$. 
As usual, we take $C[0,\infty)$ with the topology of uniform convergence in compact sets. 
The first thing we need to prove is that $X^{(n)}$ converges uniformly to $X$, in compact sets of the arguments $(t, B)$. 

\begin{lemma}\label{lemaXnX}
We have that
\begin{equation*}
X_{t}^{(n)}(B)\to X_{t}(B),\text{ as }n\to\infty,
\end{equation*} 
uniformly in compact sets of $(t,B)\in[0,\infty)\times C[0,\infty)$.
\end{lemma}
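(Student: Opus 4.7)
The plan is to argue pathwise, treating the construction $B \mapsto X_t(B)$ as a composition of four operations: (i) form the scale function $A$ from $W$, (ii) invert it to get $A^{-1}$, (iii) build the time change $T(u,B)=\int_0^u e^{-2W(A^{-1}(B(s)))}ds$ and invert it, (iv) evaluate $A^{-1}(B(T^{-1}(t,B)))$. Since all four steps are built from compositions and integrals of continuous functions, the proof should reduce to showing that each step is continuous with respect to uniform convergence on compact sets, and then applying this to the hypothesis that $W_n\to W$ uniformly on compact subsets of $\mathbb{R}$ (which we may assume by a Skorokhod-type representation, since $W$ has continuous paths).

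First I would note that uniform convergence of $W_n$ on a compact interval $[-M,M]$ gives uniform convergence of $e^{W_n}$ on the same interval, hence $A_n\to A$ uniformly on $[-M,M]$. Because $A$ is a strictly increasing homeomorphism of $\mathbb{R}$ (being the integral of the strictly positive continuous function $e^{W}$), $A_n^{-1}\to A^{-1}$ uniformly on compact subsets of $\mathbb{R}$; this is a standard consequence of the fact that uniformly converging monotone continuous maps have uniformly converging inverses on any compact. Next, fixing a compact set $\mathcal{K}\subset C[0,\infty)$ and $t\in[0,T_0]$, Arzel\`a–Ascoli gives a uniform bound $\sup_{B\in\mathcal{K}}\sup_{s\in[0,U]}|B(s)|\le M$ for every fixed $U$, so the image $B(s)$ stays in a fixed compact set where $A_n^{-1}\to A^{-1}$ uniformly; composing with $W_n\to W$ (uniform on compacts) then yields
\begin{equation*}
e^{-2W_n(A_n^{-1}(B(s)))}\longrightarrow e^{-2W(A^{-1}(B(s)))}
\end{equation*}
uniformly in $(s,B)\in[0,U]\times\mathcal{K}$, hence $T_n(u,B)\to T(u,B)$ uniformly on compact subsets of $[0,\infty)\times\mathcal{K}$.

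The delicate step is passing from uniform convergence of $T_n$ to uniform convergence of $T_n^{-1}$ jointly in $(t,B)$: for each fixed $B$, $T(\cdot,B)$ is a strictly increasing homeomorphism of $[0,\infty)$ (since the integrand is strictly positive and continuous), so $T_n^{-1}(\cdot,B)\to T^{-1}(\cdot,B)$ pointwise, but to get uniformity in $B$ ranging over $\mathcal{K}$ one needs a \emph{joint} lower bound on the integrand. This follows because on a compact $B$-range the integrand is bounded below by a positive constant, and the convergence $T_n(u,B)\to T(u,B)$ is uniform on $[0,U]\times\mathcal{K}$, which together force $\sup_{t\in[0,T_0],B\in\mathcal{K}}|T_n^{-1}(t,B)-T^{-1}(t,B)|\to 0$. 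I anticipate this is the main technical obstacle and will likely be handled by an explicit $\varepsilon$–$\delta$ argument using the positive lower bound on the integrand.

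Once $T_n^{-1}\to T^{-1}$ uniformly on compact subsets of $[0,T_0]\times\mathcal{K}$, the final step is routine: $B(T_n^{-1}(t,B))\to B(T^{-1}(t,B))$ uniformly, because the elements of $\mathcal{K}$ are equicontinuous on any compact interval, and then one more application of the uniform convergence $A_n^{-1}\to A^{-1}$ on a suitable compact of $\mathbb{R}$ gives
\begin{equation*}
X_t^{(n)}(B)=A_n^{-1}\bigl(B(T_n^{-1}(t,B))\bigr)\longrightarrow A^{-1}\bigl(B(T^{-1}(t,B))\bigr)=X_t(B),
\end{equation*}
uniformly on $[0,T_0]\times\mathcal{K}$, as desired.
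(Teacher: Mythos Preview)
Your proposal is correct and follows the same decomposition as the paper: pass from $W_n\to W$ to $A_n\to A$, then to $A_n^{-1}\to A^{-1}$, then to $T_n\to T$ and $T_n^{-1}\to T^{-1}$, and finally compose. The only real difference is the device used to establish uniform convergence on compacts. You fix a compact $[0,T_0]\times\mathcal{K}$, use Arzel\`a--Ascoli to bound the ranges, and sketch an $\varepsilon$--$\delta$ argument for the inversion of $T_n$. The paper instead invokes the sequential characterization (Dugundji, Ch.~XII \S7): $f_n\to f$ uniformly on compacts if and only if $f_n(x_n)\to f(x)$ whenever $x_n\to x$. This reduces the whole lemma to chasing a single sequence $(t_n,B_n)\to(t,B)$ through each composition, so no explicit compacts, equicontinuity, or lower bounds on the integrand need to be tracked; the step $A_n\to A$ is handled by the Polya--Szeg\H{o} result on monotone pointwise limits (the paper's Appendix), and $A_n^{-1}\to A^{-1}$ by a lemma of Seignourel. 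Your treatment is more explicit --- in particular you correctly flag the $T_n^{-1}$ step as the delicate one, whereas the paper passes over it in a single line --- but the underlying argument is the same.
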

\begin{proof}
To prove convergence uniformly in compact sets, it is a topological fact (see e.g. Chapter XII, section 7, p.267 of \cite{Dugundji}) that it is enough to prove that
\begin{equation}\label{EqCXn}
X_{t_{n}}^{(n)}(B_{n})\to X_{t}(B),\text{ as }n\to\infty,
\end{equation} 
whenever $t_{n}\to t$ and $B_{n}\to B$.

First of all, since $W_{n}\to W$ uniformly in compact sets, then $A_{n}\to A$ pointwise, and by Theorem \ref{ThmUCC} in the Appendix, the convergence is also uniform in compact sets. 
Therefore, $A_{n}^{-1}\to A_{n}^{-1}$ uniformly in compact sets, see Lemma 6 in \cite{Seignourel}. 
From this, we can see that the composition 
$$W_{n}\circ A_{n}^{-1}\circ B_{n}\text{ also converges to }W\circ A^{-1}\circ B$$
uniformly in compact sets of the domain $[0,\infty)$.
This implies that
\begin{equation}\label{LimTn}
T_{n}(t_{n}, B_{n})=\int_{0}^{t_{n}}e^{-2 W_{n}(A_{n}^{-1}(B_{n}(s)))}ds\to 
T (t, B)=\int_{0}^{t}e^{-2 W(A^{-1}(B(s)))}ds, \ n\to\infty.
\end{equation}
With all these we can see that 
\begin{equation*}
A_{n}^{-1}\circ B_{n}\circ T_{n}^{-1} (t_{n}, B_{n})\to A^{-1}\circ B \circ T^{-1}(t,B), \ n\to\infty,
\end{equation*}
which is (\ref{EqCXn})
\end{proof}

\begin{corollary}\label{CoroFcont}
For almost every $B$, the map 
$$F:C(\mathbb{R})\to C[0,\infty)$$ 
described in (\ref{EqF}) is continuous.
\end{corollary}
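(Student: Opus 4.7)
The plan is to deduce the corollary directly from Lemma \ref{lemaXnX}, once one notices that its proof does not actually exploit the piecewise-differentiable structure of the approximating sequence constructed in Section \ref{SecAprox}: it uses only (i) the uniform convergence $W_n\to W$ on compact sets, and (ii) the strict positivity of the integrands defining $A_n$ and $T_n$, which forces these maps to be strictly increasing and therefore invertible. Hence the conclusion $X^{(n)}_{t_n}(B_n)\to X_t(B)$ applies to \emph{any} sequence $W_n\to W$ in $C(\mathbb{R})$, not just the specific one considered earlier.

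First, I would select a full-measure set of trajectories $B\in C[0,\infty)$ for which the construction (\ref{EqFX}) is well-posed for every environment $W\in C(\mathbb{R})$. Concretely, I need $T(u,B)=\int_0^u e^{-2W(A^{-1}(B(s)))}ds$ to be finite for each $u$ (automatic, since $B$ and hence $W\circ A^{-1}\circ B$ is bounded on compact time-intervals) and to tend to $\infty$ as $u\to\infty$, so that $T^{-1}$ is defined on all of $[0,\infty)$. For such $B$, the map $F=F_B\colon C(\mathbb{R})\to C[0,\infty)$ of (\ref{EqF}) is well-defined.

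Second, to verify continuity of $F$ at an arbitrary $W\in C(\mathbb{R})$, I would take any sequence $W_n\to W$ in the topology of uniform convergence on compact sets and apply Lemma \ref{lemaXnX} with this sequence, choosing $B_n:=B$ and $t_n:=t$. The lemma then yields $F(W_n)(t)\to F(W)(t)$, and in fact, because the lemma provides convergence uniformly on compact subsets of $(t,B)$, restricting to the constant sequence $B_n=B$ gives uniform convergence in $t$ on each compact $[0,T]$. This is exactly the statement that $F(W_n)\to F(W)$ in $C[0,\infty)$, i.e., the continuity of $F$ at $W$.

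The only genuine point to check is that Lemma \ref{lemaXnX} applies when $W_n$ is merely continuous rather than piecewise differentiable; inspection of its proof (which invokes Theorem \ref{ThmUCC} and Lemma 6 of \cite{Seignourel} to pass uniform convergence through compositions and inverses) confirms that no differentiability is used. Hence this is the main delicacy of the argument, but it is resolved by a direct re-reading of the earlier proof, and no new estimates are required.
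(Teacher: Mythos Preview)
Your proposal is correct and follows essentially the same approach as the paper: fix $B$, take an arbitrary sequence $W_n\to W$ in $C(\mathbb{R})$, and invoke Lemma~\ref{lemaXnX} (whose proof, as you observe and as the paper implicitly assumes, uses only uniform convergence of $W_n$ on compacts and never the piecewise differentiability). The paper reaches the conclusion via the sequential characterization $F_{t_n}(W_n)\to F_t(W)$ for $t_n\to t$, while you restrict the uniform-on-compacts conclusion of the lemma to the constant sequence $B_n\equiv B$; these are equivalent formulations of the same reduction.
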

\begin{proof}
First, fix $B$. We need to show that $F(W_{n})\to F(W)$ whenever $W_{n}\to W$ as $n\to\infty$. 
Let $F_{t}(W_{n}):=X_{t}^{(n)}$ and $F_{t}(W):=X_{t}$.
As mentioned in the proof of Lemma \ref{lemaXnX}, it is enough to prove that $F_{t_{n}}(W_{n})\to F_{t}(W)$ whenever $t_{n}\to t$, statement which is contained in previous lemma.
\end{proof}

\begin{lemma}\label{LemaSGc}
For each $t\geq 0$ and $f\in C_{0}$,
$$\|H_{t}^{(n)}f- H_{t}f\|\to 0,\ n\to \infty.$$ 
\end{lemma}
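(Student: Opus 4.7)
Plan: The strategy is to reduce convergence of the semigroups to the almost-sure trajectory convergence from Lemma \ref{lemaXnX}, extended to arbitrary starting points, and then to upgrade pointwise convergence in $x$ to uniform convergence by a compact-plus-tails argument.

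First I would extend the Brox-type representation to allow for an arbitrary starting point $x$: write
$$X_t^x(B):=A^{-1}(A(x)+B(T_x^{-1}(t,B))),\qquad X_t^{(n),x}(B):=A_n^{-1}(A_n(x)+B(T_{n,x}^{-1}(t,B))),$$
with $T_x,T_{n,x}$ defined as in (\ref{FormulasT}) but with $B(s)$ replaced by $A(x)+B(s)$ (respectively $A_n(x)+B(s)$). The proof of Lemma \ref{lemaXnX} used only uniform-on-compact convergence of $A_n$, $A_n^{-1}$ and $W_n\circ A_n^{-1}$, and the same argument verbatim gives $X_t^{(n),x_n}(B_n)\to X_t^x(B)$ whenever $(t_n,x_n,B_n)\to (t,x,B)$, i.e.\ convergence holds uniformly on compact sets of the triple $(t,x,B)$. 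For each fixed $x$, continuity and boundedness of $f$ combined with dominated convergence then immediately give the pointwise limit $H_t^{(n)}f(x)\to H_tf(x)$.

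To upgrade this to uniformity on a compact set $K\subset\mathbb{R}$, given $\varepsilon>0$ I would use tightness of the Wiener measure to pick a compact $\mathcal{K}\subset C[0,\infty)$ with $P(B\notin\mathcal{K})<\varepsilon/(4\|f\|_\infty)$. On the compact set $K\times\mathcal{K}$ the uniform convergence from the first step, together with uniform continuity of $f$, gives $\sup_{(x,B)\in K\times\mathcal{K}}|f(X_t^{(n),x}(B))-f(X_t^x(B))|<\varepsilon/2$ for $n$ large, and splitting the expectation yields $\sup_{x\in K}|H_t^{(n)}f(x)-H_tf(x)|<\varepsilon$.

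The main obstacle will be the tail control: to obtain uniform convergence on all of $\mathbb{R}$ one must show that $|H_t^{(n)}f(x)|$ and $|H_tf(x)|$ tend to $0$ as $|x|\to\infty$ \emph{uniformly in $n$}. Since $f\in C_0$, this reduces to verifying $\sup_n P(|X_t^{(n),x}|\le M)\to 0$ as $|x|\to\infty$ for each fixed $M>0$. Using that $A_n(x)\to\pm\infty$ as $x\to\pm\infty$ uniformly in $n$ (because $A_n\to A$ uniformly on compacts while $A(x)\to\pm\infty$), the event $\{X_t^{(n),x}\in[-M,M]\}$ forces the Brownian increment $B(T_{n,x}^{-1}(t,B))$ to cancel the diverging quantity $A_n(x)$, while the time change $T_{n,x}^{-1}$ stays controlled thanks to the uniform bound on $W_n$ over the relevant compact range. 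A quantitative Gaussian tail estimate combining these facts is what has to be supplied, and this is the technically delicate part of the argument.
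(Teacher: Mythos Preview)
Your route genuinely differs from the paper's. The paper does \emph{not} introduce the started-at-$x$ representation; it simply writes $H_t^{(n)}f(x)=E[f(X_t^{(n)}+x)]$ and argues that $X_t^{(n)}+x_n\to X_t+x$ almost surely, then uses $f\in C_0$ to get the tail for free. That shortcut tacitly assumes spatial homogeneity, which the quenched Brox diffusion does not have, so your decision to write out $X_t^{(n),x}$ via $A_n^{-1}(A_n(x)+B(\cdot))$ is the honest version of the argument and your steps up through uniform convergence on compacts are sound.

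The gap is exactly where you flag it, and your sketch does not close it. You assert that ``the time change $T_{n,x}^{-1}$ stays controlled thanks to the uniform bound on $W_n$ over the relevant compact range,'' but the relevant range is not compact and not fixed: on the event $\{|X_t^{(n),x}|\le M\}$ with $x\gg M$, the trajectory $s\mapsto A_n^{-1}(A_n(x)+B(s))$ lives in $[M,\infty)$ until it first hits $M$, and during that time it can visit arbitrarily large values (governed by the running maximum of $B$). Thus $W_n$ is evaluated on an $x$-dependent, unbounded random set, and since $W_n\to W$ with $W$ a Brownian path, $\sup W_n$ over that set is not uniformly bounded. Without an upper bound on $T_{n,x}^{-1}(t,B)$ that is uniform in $n$ and in $x$ large, the Gaussian estimate $P\big(B(T_{n,x}^{-1}(t,B))\le A_n(M)-A_n(x)\big)\to 0$ cannot be invoked as stated. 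To repair this you would need either a genuine Feller-type estimate for the quenched process (e.g.\ a lower bound on the hitting time of $M$ from $x$ in terms of occupation times of $B$ in $[A_n(M)-A_n(x),0]$, where the environment \emph{is} controlled on $[M,x]$), or an alternative argument showing equi-decay of $H_t^{(n)}f$ at infinity.
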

\begin{proof}
From Lemma \ref{lemaXnX}, we have that $X_{t}^{(n)}(B)\to X_{t}(B)$ for each path $B$, we are then saying that $X_{t}^{(n)}\to X_{t}$ for almost every path $B$.
Then, 
$$X_{t}^{(n)}+x_{n}\to X_{t}+x$$ 
almost surely whenever $x_{n}\to x$. 
Therefore, because any $f$ is bounded,
\begin{equation*}
E\left[ f(X_{t}^{(n)}+x_{n}) \right]\to E\left[ f(X_{t}+x) \right],\ n\to\infty.
\end{equation*} 
So, we are actually saying that $H_{t}^{(n)}f$ converges to $H_{t}f$ uniformly in compact sets of $\mathbb{R}$. 
However, since $f$ vanishes at the infinity, given $\epsilon>0$ we can give a compact set $K\subset \mathbb{R}$ such that
\begin{equation}
\left| E[f(X_{t}^{(n)}+x)]-E[f(X_{t}+x)]  \right|\leq \left| E[f(X_{t}^{(n)}+x)]-E[f(X_{t}+x)]  \right|I_{K}(x)+\epsilon,
\end{equation}
where $I_{K}(x)=0$ if $x\notin K$ and $I_{K}(x)=1$ if $x\in K$. 
Therefore, we have that $H_{t}^{(n)}f$ converges to $H_{t}f$ uniformly in the whole real line, i.e. with the supremum norm.
\end{proof}

We can now prove that $Lf_{n}\to Lf$ in the supremum norm for those functions $f$ where $Lf$ is well defined, i.e. if $f\in D$.
\begin{corollary}\label{CoroLnL}
It holds that 
$$\|L_{n}f-Lf\|\to 0$$ 
for any $f\in  D$.
\end{corollary}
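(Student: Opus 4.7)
The plan is to exploit the Hille--Yosida correspondence between a Feller semigroup and its generator on $D$, together with Lemma \ref{LemaSGc}. For $f\in D$ and any $t>0$ I would split
\begin{equation*}
L_{n}f-Lf=\left(L_{n}f-\frac{H_{t}^{(n)}f-f}{t}\right)+\frac{1}{t}\bigl(H_{t}^{(n)}f-H_{t}f\bigr)+\left(\frac{H_{t}f-f}{t}-Lf\right),
\end{equation*}
and then control each summand by sending $t\downarrow 0$ and $n\to\infty$ in the appropriate order.

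The third summand tends to $0$ in sup norm as $t\downarrow 0$, since $f\in D$ and $L$ is by definition the strong generator of $\{H_{t}\}$ on $D$. The middle summand equals $t^{-1}\|H_{t}^{(n)}f-H_{t}f\|$, which for each fixed $t>0$ tends to $0$ as $n\to\infty$ by Lemma \ref{LemaSGc}. For the first summand I would invoke the integrated identity $H_{t}^{(n)}f-f=\int_{0}^{t}H_{s}^{(n)}L_{n}f\,ds$ (valid since $f\in D$ is assumed to lie in the domain of $L_{n}$ as well, as the corollary implicitly requires), giving
\begin{equation*}
L_{n}f-\frac{H_{t}^{(n)}f-f}{t}=\frac{1}{t}\int_{0}^{t}\bigl(L_{n}f-H_{s}^{(n)}L_{n}f\bigr)\,ds,
\end{equation*}
whose sup norm is bounded by $\sup_{s\in[0,t]}\|L_{n}f-H_{s}^{(n)}L_{n}f\|$; this vanishes as $t\downarrow 0$ by strong continuity of $H^{(n)}$ at $s=0$.

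The main obstacle is to render this last convergence \emph{uniform in $n$}, so that $t$ can be chosen before $n$. This amounts to an equicontinuity at $s=0$ of the family $\{s\mapsto H_{s}^{(n)}L_{n}f\}_{n}$. To obtain it I would exploit the explicit construction (\ref{EqFX})--(\ref{FormulasT}) and its approximation for $X^{(n)}$: by Lemma \ref{lemaXnX}, $X^{(n)}\to X$ uniformly on compacta of $(t,B)$, while $W_{n}\to W$ uniformly on compacta by assumption; combined with the Mandl-type representation used in Proposition \ref{PropDom} (now with $W_{n}$ in place of $W$), these give both a uniform bound on $\|L_{n}f\|$ and a uniform modulus of continuity for $s\mapsto E[(L_{n}f)(X_{s}^{(n)}+x)]$ at $s=0$.

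Given $\varepsilon>0$, I would first pick $t$ small so that the first and third summands are each $<\varepsilon/3$ uniformly in $n$, and then with this $t$ fixed pick $n_{0}$ so that the middle summand is $<\varepsilon/3$ for all $n\geq n_{0}$; this yields $\|L_{n}f-Lf\|<\varepsilon$ for $n\geq n_{0}$.
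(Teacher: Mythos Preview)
The paper's proof is a one-line citation: it applies Theorem 6.1 of Ethier--Kurtz (a Trotter--Kato type approximation theorem) to pass directly from the semigroup convergence of Lemma \ref{LemaSGc} to generator convergence on $D$. Your three-term splitting is a different route---you are in effect trying to reprove that implication by hand in this particular setting.

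The obstacle you isolate, uniformity in $n$ of the first summand, is genuine, and your proposed fix through the Mandl representation does not close it. From Proposition \ref{PropDom}, any $f\in D$ with $g=Lf$ satisfies
\[
f'(x)=e^{W(x)}\Bigl(2\int_{0}^{x}g(s)\,e^{-W(s)}\,ds+f'(0)\Bigr),
\]
so $f'$ is built out of the \emph{limiting} environment $W$. Writing the analogous representation ``with $W_{n}$ in place of $W$'' does not compute $L_{n}f$: to extract $L_{n}f$ one must differentiate $e^{-W_{n}(x)}f'(x)=e^{W(x)-W_{n}(x)}(\cdots)$, which still contains the nowhere-differentiable $W$, and hence neither a closed formula for $L_{n}f$ nor a uniform bound on $\|L_{n}f\|$ falls out. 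Moreover, even granting such a bound, the equicontinuity of $s\mapsto H_{s}^{(n)}(L_{n}f)$ at $s=0$ does not follow from Lemma \ref{lemaXnX}, which controls $X^{(n)}$ only on compact sets of paths $B$, whereas $H_{s}^{(n)}$ integrates over all of Wiener space. These are precisely the difficulties that the resolvent-based machinery behind Ethier--Kurtz Theorem 6.1 is designed to sidestep; if you want a self-contained argument, it is cleaner to pass through $R_{\lambda}^{(n)}h=\int_{0}^{\infty}e^{-\lambda t}H_{t}^{(n)}h\,dt$ together with Lemma \ref{LemaSGc}, rather than through your decomposition.
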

\begin{proof}
It follows from Lemma \ref{LemaSGc} and Theorem 6.1 in \cite[p.28]{Ethier} that 
$$\|L_{n}f-Lf\|\to 0\text{ for any } f\in D,$$
in particular for any $f\in D$.
\end{proof}

We now present the so-called Dirichlet form associated to the Markov process.
\begin{theorem}
We have that the generator $L$ of the Brox diffusion satisfies
\begin{equation*}
\langle Lf,g \rangle =
 -\frac{1}{2}\int_{-\infty}^{\infty} f^{\prime}g^{\prime} - \frac{1}{2}\int_{-\infty}^{\infty} f^{\prime}gdW,
\end{equation*}
for functions $f,g\in C_{0}^{1}$. 
\end{theorem}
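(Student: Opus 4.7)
\bigskip
\noindent\textbf{Proof proposal.} My plan is to obtain the bilinear identity for $L$ by passing to the limit in the corresponding identity for the smooth approximations $L^{(n)}$ already established in Section \ref{SecAprox}. Concretely, fix the approximating sequence $\{W_n\}$ of piecewise differentiable processes converging uniformly on compacts to $W$, and recall the displayed formula
\begin{equation*}
\langle L^{(n)} f, g\rangle = -\tfrac{1}{2}\int f' g' - \tfrac{1}{2}\int f' g\, dW_n,
\end{equation*}
which follows for $f,g\in C_0^1$ from $L^{(n)}f=\tfrac12 f''-\tfrac12 W_n' f'$ via an ordinary integration by parts (legitimate because $W_n$ is absolutely continuous and $f,g$ vanish at infinity). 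The statement will be proved by taking $n\to\infty$ term by term.

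For the left-hand side, Corollary \ref{CoroLnL} gives $\|L^{(n)}f-Lf\|_\infty\to 0$ for $f\in D$, and combining this with integrability of $g$ (to which we may reduce using compactly supported test functions and density, since by Proposition \ref{PropDom} the domain $D\subset C^1(\mathbb{R})$) we obtain $\langle L^{(n)}f,g\rangle\to \langle Lf,g\rangle$. The first right-hand term $-\tfrac12\int f'g'$ carries no $n$-dependence, so nothing needs to be done there.

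The real work, and the main obstacle, is the convergence $\int f' g\, dW_n \to \int f' g\, dW$. Since $W$ is only Hölder continuous, the integral on the right must be interpreted carefully (as a pathwise Young/Stratonovich integral or equivalently through integration by parts). My plan is to first establish the identity on a smooth dense subclass, say $f,g\in C_c^\infty$, by integrating by parts:
\begin{equation*}
\int f'g\, dW_n = -\int (f'g)' W_n\, dx,
\end{equation*}
with vanishing boundary terms; uniform convergence $W_n\to W$ on the compact support of $(f'g)'$ then yields $\int f'g\, dW_n\to -\int (f'g)' W\, dx$, which is exactly the pathwise definition of $\int f'g\, dW$. Having the identity on $C_c^\infty$, I would extend it to arbitrary $f,g\in C_0^1$ by density, using that both sides are continuous in appropriate norms (the left side by sup-norm bounds on $Lf$ combined with $L^1$ control of $g$, the right side by the Young-integral continuity estimates against the Hölder path $W$). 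The delicate point throughout will be ensuring that the various integrals converge absolutely, which is why the reduction to compactly supported approximants and density is essential.
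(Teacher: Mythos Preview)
Your overall architecture is exactly the paper's: compute $\langle L^{(n)}f,g\rangle$ for the smooth environments $W_n$, let the left side converge via Corollary \ref{CoroLnL}, and identify the limit of the right side. The only substantive difference is in how you treat $\int f'g\,dW_n\to\int f'g\,dW$. The paper interprets $\int f'g\,dW$ as an It\^o (Wiener) integral---valid because $f'g$ is deterministic and square-integrable---and simply appeals to $L_2$-convergence of the approximating integrals, extracting an almost-sure subsequence to obtain the identity for almost every fixed path $W$. Your route is purely pathwise: integrate by parts on $C_c^\infty$ and then extend by density. This is a legitimate alternative, and arguably more elementary since it uses no probability on the $W$-side; however, the density step you sketch (``Young-integral continuity estimates'') is exactly where care is needed, since a Young estimate against a path of H\"older exponent below $1/2$ requires the integrand $f'g$ to be H\"older of exponent above $1/2$, which arbitrary $f,g\in C_0^1$ do not provide. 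The paper's $L_2$/It\^o route bypasses this by needing only $f'g\in L^2$. Your argument goes through cleanly if you keep $f\in C_0^2$ (so that $(f'g)'$ exists and the integration-by-parts definition applies directly), which is in any case how the paper's own proof begins before invoking Proposition \ref{PropDom}.
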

Notice that since $f$ and $g$ are deterministic functions, 
the last integral of previous display can be considered an It\^{o}'s stochastic integral.

\begin{proof}
From Corollary \ref{CoroLnL} 
$$(\forall f\in C_{0}^{2}\bigcap D)\ L_{n}f\to Lf$$
with the supremum norm.
Thus, 
$$(\forall f\in C_{0}^{2}\bigcap D)(\forall g\in C_{0}) \langle L_{n}f, g\rangle\to\langle Lf, g\rangle.$$

On the other hand, from the theory of stochastic integrals (by taking a subsequence from the $L_{2}$-convergence) we can take the convergence
\begin{equation*}
\int f^{\prime} g dW_{n}\to \int f^{\prime} g d W
\end{equation*}
to be almost surely, i.e. for almost every $W$.
Thus, we can say that
\begin{equation*}
\langle L_{n}f, g\rangle \to 
-\frac{1}{2}\int_{-\infty}^{\infty} f^{\prime}g^{\prime} - \frac{1}{2}\int_{-\infty}^{\infty} f^{\prime}gdW, \
n\to\infty,
\end{equation*}
which in fact holds for functions $f,g\in C_{0}^{1}$. 
However, according to Proposition \ref{PropDom}, the domain of $L$ is contained in $C_{0}^{1}$. 
Therefore, the bilinear form is the associated Dirichlet form of the generator $L$.
\end{proof}

In previous result, one realizes that, when describing $L$ through the bilinear form, the domain is considered to be $C_{0}^{1}$, which 
is a bigger set of functions and independent of the trajectory $W$. 
This kind of phenomenon is part of the theory of Dirichlet forms.  
The interested reader on Dirichlet forms and Markov process can consult \cite{Fukushima}.

\section{The Sinai's walk}
In this section we want to introduce the sequence of Sinai's random walks that approximate the Brox diffusion. 
We will see that apart from rescaling time and space of the corresponding random walk, we also need to rescale the the transition probabilities, i.e. the random environment.

Consider a partition of $\mathbb{R}$ into equally spaced intervals of size $\Delta_{n}>0$, and denote by $\mathbb{Z}_{n}\subset\mathbb{R}$ the lattice given by the set of extreme points of the intervals. 
Do now the same with the open interval $[0,\infty)$ by considering the discrete time $T_{n}\subset [0,\infty)$ of equidistant point of size $h_{n}>0$. 
In this paper we will focus in a rescaling such that 
$$\Delta_{n}:=1/\sqrt{n}\text{ and }h_{n}:=1/n.$$ 
We must warn the reader not to be confused by expressions such as $\Delta_{n}x$, which does not mean an increment on $x$  but it is just the multiplication $\Delta_{n}\times x$.

Now we present the sequence of approximating random walks, where the notation is such that the model fits into the description of the introduction.

For each $n=1,2,\ldots$ consider the following continuous time stochastic process
\begin{equation*}
S_{t}^{(n)}:=\sum_{i=0}^{\left[t/h_{n}\right]}\xi_{i}^{(n)},\ t\geq 0,
\end{equation*}
where $[x]$ is the interger part of $x$ and 
\begin{equation*}
\xi_{i}^{(n)}:=
\left\{
\begin{array}{cc}
+1 & p_{n}\left(S_{i-1}^{(n)}\right)\\
-1 & 1-p_{n}\left(S_{i-1}^{(n)}\right),
\end{array}
\right.
\end{equation*}
with $p_{n}(k):=1/2+\sqrt{\Delta_{n}}q (k)$, with
\begin{equation*}
q(z):=\left\{
\begin{array}{cr}
1/4 & 1/2\\
-1/4 & 1/2.
\end{array}
\right.
\end{equation*} 

Notice that $E\left[ \xi_{i}^{(n)} \xi_{j}^{(n)} \right]=0$ as long as $S_{i-1}^{(n)}\neq S_{j}^{(n)}$, and 
$E\left[ \xi_{i}^{(n)} \xi_{j}^{(n)} \right]=n^{-1/2}$ otherwise. 
This information will be useful to know later.

Now, let 
\begin{equation*}
X_{t}^{(n)}:=\frac{1}{\sqrt{n}}S_{t}^{(n)}.
\end{equation*}


For any $n=1,2,\ldots$, one can see that the random environment $E_{n}:=\{p_{n}(z),\ z\in \mathbb{Z}\}$ is known once the sequence $E:=\{q(z), \ z\in\mathbb{Z}\}$ is specified.
That is, given $E$ we know all the values in $E_{n}$.
Moreover, from the Donsker invariance principle we know that
\begin{equation}\label{EqEtoW}
\left\{ \sqrt{\Delta_{n}} \sum_{j=0}^{k/\Delta_{n}} q(j) ,\ k\in \mathbb{Z} \right\}\overset{d}{\to} 
\left\{ \frac{1}{4}W(t), \ t\in \mathbb{R} \right\},\ n\to\infty,
\end{equation}
where the factor $1/4$ was factorized from $q$.
This tells us that we can associate to each sample of $E$ a trajectory of $W$, or the other way round.
This fact will be used in later in Theorem \ref{LemaLnL}.


Given the environment $E$, when we consider the Sinai's walk $X_{t}^{(n)}$ only at the jumps, i.e. for $t\in T_{n}$.
This gives a discrete time Markov chain.
In this case, the generator $L^{(n)}$ of $\{X^{(n)}_{t},t\in T_{n} \}$ acting on a function $f:\mathbb{Z}_{n}\to \mathbb{R}$ is given by
\begin{equation}\label{Ln}
[L^{(n)}f](x)=\frac{1}{h_{n}}\left( f(x+\Delta_{n})p_{n}(x/\Delta_{n} )+f(x-\Delta_{n})(1-p_{n}(x/\Delta_{n}))-f(x) \right),\ x\in \mathbb{Z}_{n}.
\end{equation} 

\section{Convergence of the Dirichlet form}\label{SecDist}

Due to previous sections, we are now in position to prove our main result, which is the convergence of the Dirichlet forms.

Remember that we are dealing with RWREs that takes values in $\mathbb{Z}_{n}\subset \mathbb{R}$. 
From (\ref{Ln}), we can think that $ L^{(n)}f $ is a step valued function from $\mathbb{R}$ to $\mathbb{R}$.
This corresponds to consider $L^{(n)}$ as the composition of two operators, one is a projection to a vector and the other is multiplying such projection with a matrix; this way of thinking was imported from Pacheco \cite{Pacheco}.

Then, for every pair $f,g\in C_{0}^{1}$,
\begin{eqnarray*}
\langle L^{(n)}f,g \rangle&=&
\int_{-\infty}^{\infty}L^{(n)}f(y)g(y)dy\\ 
&=&\sum_{x\in \mathbb{Z}_{n}} \left\{\frac{1}{h_{n}}\left( f(x+\Delta_{n})p_{n}(x/\Delta_{n})
+f(x-\Delta_{n})(1-p_{n}(x/\Delta_{n}))-f(x) \right)\int_{x}^{x+\Delta_{n}}g(y)dy\right\}.
\end{eqnarray*}

Now, we can prove that when leaving fixed an environment $W$, $\langle L^{(n)}f,g \rangle  \to  \langle Lf,g \rangle$ as $n\to\infty$.


\begin{theorem}\label{LemaLnL} 
There is a subsequence $\{n_{k}\}_{k\geq 1}$ such that
\begin{equation*}
\langle L^{(n_{k})}f, g \rangle\to
\frac{1}{2}\int_{-\infty}^{\infty} f^{\prime\prime}g - \frac{1}{2}\int_{-\infty}^{\infty} f^{\prime}gdW,\ k\to\infty,
\end{equation*}
for almost every trajectory $W$ and for functions $f\in C_{0}^{2}$ and $g\in C_{0}$.
\end{theorem}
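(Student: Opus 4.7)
The plan is to Taylor-expand $L^{(n)}f$, separate a deterministic (second difference) piece from a random (asymmetric) piece, and show that the deterministic piece gives a classical Riemann sum while the random piece, after passing to a suitable subsequence, converges almost surely to the Wiener integral against $W$.

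First I will expand: since $p_{n}(x/\Delta_n)=1/2+\sqrt{\Delta_n}\,q(x/\Delta_n)$ and $h_n=\Delta_n^{2}$, a second-order Taylor expansion of $f\in C_{0}^{2}$ (using that $f''$ is uniformly continuous because it vanishes at infinity) yields
\[
L^{(n)}f(x)=\tfrac{1}{2}f''(x)+\frac{2\,q(x/\Delta_n)}{\sqrt{\Delta_n}}\,f'(x)+r_{n}(x),\qquad \sup_{x}|r_{n}(x)|\to 0.
\]
Next, writing $\bar g_{n}(x):=\Delta_n^{-1}\int_{x}^{x+\Delta_n}g(y)\,dy$, the uniform continuity and decay of $g$ give $\bar g_{n}\to g$ uniformly, so
\[
\langle L^{(n)}f,g\rangle=\Delta_n\sum_{x\in\mathbb{Z}_n} L^{(n)}f(x)\,g(x)+o(1).
\]
The contribution from $\tfrac{1}{2}f''g$ is a Riemann sum converging to $\tfrac{1}{2}\int f''g\,dx$, with negligible tails because $f''g$ vanishes at infinity.

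The delicate term is
\[
2\sqrt{\Delta_n}\sum_{j\in\mathbb{Z}} q(j)\,f'(j\Delta_n)\,g(j\Delta_n).
\]
I would introduce the piecewise-constant auxiliary walk $\widetilde W_{n}(k):=4\sqrt{\Delta_n}\sum_{j=0}^{k/\Delta_n}q(j)$ so that $4\sqrt{\Delta_n}\,q(j)=\widetilde W_{n}(j\Delta_n)-\widetilde W_{n}((j-1)\Delta_n)$, rewriting the above as the Riemann--Stieltjes sum
\[
\tfrac{1}{2}\sum_{j} f'(j\Delta_n)\,g(j\Delta_n)\,\bigl[\widetilde W_{n}(j\Delta_n)-\widetilde W_{n}((j-1)\Delta_n)\bigr].
\]
By Donsker's invariance principle (the statement recalled in (\ref{EqEtoW})) $\widetilde W_{n}\Rightarrow W$ in the topology of uniform convergence on compact sets. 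Skorokhod's representation lets me extract a subsequence $\{n_k\}$ on which $\widetilde W_{n_k}\to W$ almost surely, uniformly on compacts.

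On this subsequence, for almost every realization of $W$, I must pass to the limit in the Riemann--Stieltjes sum. Because $f'g\in C_{0}$, a truncation argument suffices: on a large compact $[-M,M]$, uniform convergence of $\widetilde W_{n_k}$ to $W$ and continuity of $f'g$ give convergence of the sum on $[-M,M]$ to $\tfrac{1}{2}\int_{-M}^{M}f'g\,dW$, while the tail outside $[-M,M]$ is bounded by $\sup_{|x|>M}|f'(x)g(x)|$ times a uniformly (in $k$) integrable weight, which is controlled by Doob's inequality on $\widetilde W_{n_k}$ on any finite range. Letting $M\to\infty$ recovers the full Wiener integral $\tfrac{1}{2}\int f'g\,dW$ (up to the sign, which is fixed by the sign convention one chooses in the Skorokhod coupling; since $W\stackrel{d}{=}-W$, one can couple so as to obtain the stated sign).

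\textbf{Main obstacle.} The entire argument hinges on the last step: replacing the simple uniform-on-compacts convergence $\widetilde W_{n_k}\to W$ by convergence of Riemann--Stieltjes sums against $\widetilde W_{n_k}$ to the stochastic integral against $W$. Because the integrand $f'g$ is deterministic and in $C_{0}$, the forward/backward/Stratonovich ambiguity that normally plagues such approximations disappears, so the difficulty is purely the tail control at infinity; that is handled using the compact-support-like decay built into $C_{0}^{2}$ and $C_{0}$, combined with maximal estimates on $\widetilde W_{n_k}$. Everything else (Taylor remainders, Riemann sum convergence, replacement of $\bar g_{n}$ by $g$) is routine once the correct error bounds are in place.
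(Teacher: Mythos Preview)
Your decomposition into a symmetric second-difference piece and an antisymmetric random piece is exactly the paper's decomposition (the paper does it algebraically rather than via Taylor, but the outcome is the same two sums). The divergence is in how the random piece is handled, and that is where your argument has a real gap.

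The paper treats the random sum
\[
2\sum_{x\in\mathbb{Z}_n}\frac{f(x+\Delta)-f(x-\Delta)}{2\Delta}\,g(y_x)\,q_n(x)
\]
as a sum of independent mean-zero terms whose total variance converges to $\int (f'g)^2$; this gives $L_2$-convergence to the Wiener integral, and a subsequence is then extracted for almost-sure convergence. No regularity of $f'g$ beyond square-integrability is used.

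You instead couple $\widetilde W_{n_k}\to W$ almost surely, uniformly on compacts, and then assert that ``uniform convergence of $\widetilde W_{n_k}$ to $W$ and continuity of $f'g$ give convergence of the sum on $[-M,M]$ to $\tfrac12\int_{-M}^{M}f'g\,dW$.'' This step is not correct as stated. Writing $h=f'g$ and using Abel summation,
\[
\sum_j h(x_j)\bigl[\widetilde W_{n_k}(x_j)-\widetilde W_{n_k}(x_{j-1})\bigr]
= \text{(boundary terms)} - \sum_j\bigl[h(x_{j+1})-h(x_j)\bigr]\widetilde W_{n_k}(x_j),
\]
and comparing with the same expression for $W$ leaves an error bounded by
\[
\sup_{[-M,M]}\bigl|\widetilde W_{n_k}-W\bigr|\;\cdot\;\sum_j\bigl|h(x_{j+1})-h(x_j)\bigr|.
\]
The first factor tends to $0$, but the second is the variation of $h$ along the partition, which is in general unbounded when $g\in C_0$ only (so $h=f'g$ need not be of bounded variation). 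Uniform convergence of integrators of unbounded variation simply does not transfer to convergence of Riemann--Stieltjes sums with merely continuous integrands. If you strengthen the hypothesis to $g\in C_0^1$, then $h\in C^1$ is locally BV and your pathwise argument goes through; for $g\in C_0$ as in the statement you have to fall back on the $L_2$/variance computation, which is precisely the paper's route.

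A smaller issue of the same flavour: the replacement of $\bar g_n$ by $g$ is not an $o(1)$ in the naive sense, because the random piece of $L^{(n)}f$ has amplitude of order $\Delta_n^{-1/2}$. The error term $2\sqrt{\Delta_n}\sum_j q(j)\,f'(j\Delta_n)\bigl[\bar g_n(j\Delta_n)-g(j\Delta_n)\bigr]$ is small only after exploiting the independence and zero mean of the $q(j)$ (its variance is $\lesssim\|\bar g_n-g\|_\infty^2\int(f')^2\to0$), i.e.\ again an $L_2$ argument rather than a uniform bound.
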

\begin{proof}
To simplify notation let us omit subscript $n$ in $\Delta_{n}$ and in $p_{n}$. Using the mean valued theorem for integrals (see e.g. Bartle\cite{Bartle}), there are values $y_{x}\in [x,x+\Delta)$ for each $x\in \mathbb{Z}_{n}$ such that
$$ \int_{x}^{x+\Delta}g = g(y_{x})\Delta.$$
Define $q_{n}(x):=\sqrt{\Delta}q(x/\Delta )$. Then we have
\begin{eqnarray*}
\langle L^{(n)}f,g \rangle &=& 
\sum_{x\in \mathbb{Z}_{n}}\left\{\frac{1}{\Delta^{2}}\left( f(x+\Delta)p(x/\Delta )+f(x-\Delta)(1-p(x/\Delta ))-f(x) \right)g(y_{x})\Delta\right\}\\
&=& \sum_{x\in \mathbb{Z}_{n}}\left\{\frac{1}{\Delta^{2}}\left( \left\{\frac{1}{2}+q_{n}(x)\right\}\{f(x+\Delta)-f(x-\Delta)\}+ f(x-\Delta) -f(x) \right)g(y_{x})\Delta\right\}\\
&=& \frac{1}{2}\sum_{x\in \mathbb{Z}_{n}}\frac{f(x+\Delta)-2f(x)+f(x-\Delta)}{\Delta^{2}}g(y_{x})\Delta \\
&+& 2\sum_{x\in \mathbb{Z}_{n}} \frac{f(x+\Delta)-f(x-\Delta)}{2\Delta}g(y_{x})q_{n}(x).
\end{eqnarray*}

Notice that the variance of the random variables $q_{n}(x)$ is of order $\Delta$ and they from an independent sequence.
Hence, using theory of stochastic integrals, and using the fact (\ref{EqEtoW}), one can check that as $n\to\infty$ 
for any pair $f,g\in C_{0}^{2}$
\begin{equation*}
\langle L^{(n)}f, g \rangle\overset{L_{2}}{\to}
\frac{1}{2}\int_{-\infty}^{\infty} f^{\prime\prime}g - \frac{1}{2}\int_{-\infty}^{\infty} f^{\prime}gdW,
\end{equation*}
where $\overset{L_{2}}{\to}$ stands for convergence in mean square of random variables.
In previous limit we have factorized the constant $1/4$ from the Bernoulli random variable $q_{n}(x)$.
Thus, we can suppose that there is a subsequence where we have convergence almost surely, that is for almost every fixed trajectory $W$.
\end{proof}

One can also check from the proof the following.
\begin{remark} 
In the proof of previous theorem, suppose that $q_{n}$ are random variables with $var(q_{n}(x))\leq c \Delta^{\gamma}$ for some constants $c>0$ and $\gamma\in[0,1)$. 
Then, in the limit the second term vanishes and one recovers the generator of the Brownian motion. 
In this way, we can think of the Brownian motion and the Brox diffusion coming from the same type of model with different local specifications.
\end{remark}

\begin{remark} By taking $q_{n}(x)$ such that
\begin{equation*}
q_{n}(x)=
\left\{
\begin{array}{cr}
(\sqrt{\Delta_{n}}+\kappa \Delta_{n})/4 & 1/2\\
(-\sqrt{\Delta_{n}}+\kappa \Delta_{n})/4 & 1/2,
\end{array}
\right.
\end{equation*} 
we can have the convergence when the environment $W(x)$ is of the form $\beta(x)+\kappa x$, where $\beta$ is another BM. 
This model is considered for instance in \cite{Kawazu3, Talet}.
\end{remark}
 
\section{Appendix}

The following result turns out to be very useful to us. 
It can be found as an exercise 127 in \cite[p. 81]{Polya}. 
\begin{theorem}\label{ThmUCC}
Let $f_{n}:\mathbb{R}\to \mathbb{R}$ be a sequence of continuous functions that converge pointwise to another continuous function $f:\mathbb{R}\to \mathbb{R}$.
If in addition all these functions are monotone, then the convergence is uniform in compact sets.
\end{theorem}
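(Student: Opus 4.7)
The plan is to prove uniform convergence on an arbitrary compact interval $[a,b]$ by using the uniform continuity of the continuous limit $f$ to replace pointwise control on $[a,b]$ by pointwise control at finitely many chosen points, and then exploiting the monotonicity of the $f_n$ to sandwich $f_n(x)$ between values $f_n(x_i)$ at those chosen points. Since any compact subset of $\mathbb{R}$ is contained in some $[a,b]$, this suffices.

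First I would reduce to the case in which every $f_n$ is non-decreasing. Passing to the limit in $f_n(x)\le f_n(y)$ shows that if all $f_n$ are non-decreasing then $f$ is non-decreasing; if the sequence mixes non-decreasing and non-increasing functions, the same argument applied to the two subsequences forces $f$ to be simultaneously non-decreasing and non-increasing, hence constant, and the two subsequences can then be treated separately with their own monotonicity. So I may assume $f_n$ and $f$ are all non-decreasing on $[a,b]$.

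Next, given $\varepsilon>0$, I would use the uniform continuity of $f$ on $[a,b]$ to pick a partition $a=x_0<x_1<\cdots<x_N=b$ with $|f(x_{i+1})-f(x_i)|<\varepsilon$ for every $i$. Since $\{x_0,\ldots,x_N\}$ is a finite set, the hypothesis of pointwise convergence gives an index $n_0$ such that $|f_n(x_i)-f(x_i)|<\varepsilon$ for all $i$ and all $n\ge n_0$. For an arbitrary $x\in[a,b]$, choose $i$ with $x\in[x_i,x_{i+1}]$; monotonicity of $f_n$ and of $f$ then yields
\begin{equation*}
f(x_i)-\varepsilon\le f_n(x_i)\le f_n(x)\le f_n(x_{i+1})\le f(x_{i+1})+\varepsilon
\end{equation*}
and $f(x_i)\le f(x)\le f(x_{i+1})$, so subtracting gives $|f_n(x)-f(x)|\le |f(x_{i+1})-f(x_i)|+\varepsilon<2\varepsilon$, uniformly in $x\in[a,b]$ for $n\ge n_0$.

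There is no serious obstacle here; the only mildly delicate point is the reduction to a single monotonicity type, which must be justified because the hypothesis says each $f_n$ is monotone without requiring all of them to be monotone in the same direction. Once that reduction is in hand, the proof is the standard Dini-type argument driven by a finite partition and the sandwich inequality above.
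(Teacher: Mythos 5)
Your proof is correct, and it takes a genuinely different route from the one sketched in the paper. You give the classical Pólya--Szegő argument: reduce to the non-decreasing case (with a correct treatment of the possibility that the $f_n$ are not all monotone in the same direction, a point the paper never addresses), choose a finite partition $a=x_0<\cdots<x_N=b$ on which the oscillation of $f$ is below $\varepsilon$ via uniform continuity, use pointwise convergence only at the finitely many partition points, and then sandwich $f_n(x)$ between $f_n(x_i)$ and $f_n(x_{i+1})$ to get the uniform bound $|f_n(x)-f(x)|\le 2\varepsilon$ for $n\ge n_0$. The paper instead reformulates uniform convergence on compacts as continuous convergence --- that $f_n(x_n)\to f(x)$ whenever $x_n\to x$ --- and sketches a proof by contradiction: assuming $|f_{n_k}(x_{n_k})-f_{n_k}(x)|>\varepsilon$ along a subsequence, it uses continuity of each $f_{n_k}$ to produce neighborhoods $V_k$ of $x$ on which $f_{n_k}$ varies by less than $\varepsilon$, notes that monotonicity forces these to be intervals excluding $x_{n_k}$, and argues their lengths must shrink, yielding a contradiction. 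The continuous-convergence formulation meshes directly with how the theorem is invoked in Lemma \ref{lemaXnX}, but as written it is only a sketch (the final contradiction implicitly needs pointwise convergence at points near $x$, which is not spelled out), whereas your argument is quantitative, self-contained, and complete; it is also the more standard proof of this exercise from Pólya--Szegő.
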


To prove this result one could use the following idea.
We need to prove that $f_{n}(x_{n})\to f(x)$ whenever $x_{n}\to x$.
Since 
$$
|f_{n}(x_{n})-f(x)|\leq |f_{n}(x_{n})-f_{n}(x)|+|f_{n}(x)-f(x)|,
$$
we concentrate on proving that $|f_{n}(x_{n})-f_{n}(x)|\to 0$.

If the convergence were not uniformly, given $\epsilon >0$ we could take $\{n_{k}\}\subset \{n\}$ such that
\begin{equation}\label{IneqAppend}
|f_{n}(x_{n_{k}})-f_{n_{k}}(x)|>\epsilon,\ k=1,2,\ldots.
\end{equation}

Now, since each $f_{n_{k}}$ is continuous, there are open sets $V_{k}$ such that 
$$(\forall y\in V_{k})|f_{n_{k}}(y)-f_{n_{k}}(x)|<\epsilon.$$
Therefore, each $x_{n_{k}}$ cannot be inside $V_{k}$.

From the monotonicity assumption, one knows that the sets $V_{k}$ need to be convex intervals.
But the fact $x_{n_{k}}\to x$, as $k\to\infty$, would imply that the length of $V_{k}$ goes to $0$.
This would contradict (\ref{IneqAppend}).

\bigskip

\textbf{Acknowledgements.} 



\end{document}